\newcounter{thmcounter}
\numberwithin{thmcounter}{section}
\numberwithin{equation}{thmcounter}
\newtheorem{theorem}[thmcounter]{Theorem}
\newtheorem{proposition}[thmcounter]{Proposition}
\newtheorem{corollary}[thmcounter]{Corollary}
\theoremstyle{definition}
\newtheorem{definition}[thmcounter]{Definition}
\newtheorem{remark}[thmcounter]{Remark}
\newtheoremstyle{claim}{9pt}{3pt}{}{\parindent}{\bf}{.}{1em}{}
\theoremstyle{claim}
\newtheorem{claim}[equation]{Claim}
\newenvironment{namelist}[1]{%
\begin{list}{}
{
\settowidth{\labelwidth}{#1}%
\setlength{\labelsep}{0.3em}%
\setlength{\leftmargin}{\labelwidth}%
\addtolength{\leftmargin}{\labelsep}}}{%
\end{list}}
\newcommand{\nZ}{\mathbb{Z}}                     
\newcommand{\nP}{\mathbf{P}}                     
\newcommand{\sO}{\mathscr{O}}                    
\newcommand{\sK}{\mathscr{K}}
\DeclareMathOperator{\gon}{gon}                  
\DeclareMathOperator{\Sym}{Sym}                  
\DeclareMathOperator{\sHom}{\mathscr{H}om}       
\DeclareMathOperator{\rank}{rank}                
\newcounter{rkcounter}             
\begin{document}

\title[A note on the gonality conjecture]{A note on an effective bound for the gonality conjecture}
\author{Alexander Duncan, Wenbo Niu, Jinhyung Park}

\address{{Department of Mathematical Sciences, University of Arkansas, Fayetteville, AR 72701, USA}}
\email{{asduncan@uark.edu}}

\address{Department of Mathematical Sciences, University of Arkansas, Fayetteville, AR 72701, USA}
\email{wenboniu@uark.edu}

\address{Department of Mathematical Sciences, KAIST, 291 Daehak-ro, Yuseong-gu, Daejeon 34141, Republic of Korea}
\email{parkjh13@kaist.ac.kr}

\date{\today}

\subjclass[2020]{14Q20, 13A10}

\keywords{gonality conjecture, syzygies of an algebraic curve, symmetric product of an algebraic curve}
\thanks{W. Niu was supported by the Simons Collaboration Grants for Mathematicians}

\thanks{J. Park was partially supported by the National Research Foundation (NRF) funded by the Korea government (MSIT) (NRF-2021R1C1C1005479).}

\begin{abstract} 
The gonality conjecture, proved by Ein--Lazarsfeld, asserts that the gonality of a nonsingular projective curve  of genus $g$ can be detected from its syzygies in the embedding given by a line bundle of sufficiently large degree. An effective result obtained by Rathmann says that any line bundle of degree at least $4g-3$ would work in the gonality theorem. In this note, we improve the degree bound to $4g-4$ with two exceptional cases. 
\end{abstract}

\maketitle

\section{Introduction}
\noindent We work over the field $\mathbf{C}$ of complex numbers. Let $C$ be a nonsingular projective curve of genus $g$, and $B$ and $L$ be line bundles on $C$. Suppose that $L$ is globally generated, and write $S=\Sym H^0(L)$. The associated section module 
$$
R(C, B;L):=\bigoplus_{q\geq 0}H^0(B\otimes L^q)
$$
is a finitely generated graded $S$-module. It admits a minimal graded free resolution over $S$:
$$
\cdots\longrightarrow E_p\longrightarrow \cdots \longrightarrow E\longrightarrow E_1\longrightarrow E_0\longrightarrow R(C, B;L)\longrightarrow 0.
$$
Each graded free $S$-module $E_p$ in the resolution has the form 
$$
E_p=\bigoplus_{q\in\nZ}K_{p,q}(C,B;L)\otimes S(-p-q),
$$
where $K_{p,q}(C,B;L)$ is the Koszul cohomology group defined as the cohomology at the middle of the Koszul-type complex
$$
\wedge^{p+1}H^0(L)\otimes H^0(B\otimes L^{q+1})\longrightarrow\wedge^pH^0(L)\otimes H^0(B\otimes L^q)\longrightarrow \wedge^{p-1}H^0(L)\otimes H^0(B\otimes L^{q-1}).
$$

A particularly interesting case is when $B=\sO_C$ and $L$ has large degree. The Koszul groups $K_{p,q}(C;L):=K_{p,q}(C,\sO_C;L)$ give a minimal free resolution of the section ring $R(C; L):=R(C, \sO_C; L)$. Note that if $C \subseteq \nP(H^0(L))$ is projectively normal, then $R(C; L)$ is the homogeneous coordinate ring of $C$ in $\nP(H^0(L))$.
If $L$ is nonspecial, then $K_{p,q}(C;L)$ vanishes for $q\geq 3$ and thus the minimal free resolution essentially consists of two strands $K_{p,1}(C;L)$ and $K_{p,2}(C;L)$. If $L$ has degree larger than $2g$, Green's $(2g+1+p)$ theorem (\cite[Theorem 4.a.1]{Green:KoszulI}, see also \cite{Lazarsfeld:SyzFiniteSet}) determines the strand $K_{p,2}(C; L)$ using the notion of $N_p$ property. The Green--Lazarsfeld gonality conjecture \cite{Lazarsfeld:ProjNormCurve} together with their nonvanishing theorem \cite[Appendix]{Green:KoszulI} predicts the shape of the strand $K_{p,1}(C;L)$. This conjecture was proved by Ein-Lazarsfeld \cite{Ein:Gonality} when $L$ has sufficiently large degree.  Ranthmann then showed an effective result (see \cite[Theorem 1.1]{RATH}): if $\deg L\geq 4g-3$, then 
$$
K_{p,1}(C;L)\neq 0 \Longleftrightarrow 1\leq p\leq \deg L-g-\gon(C),
$$
where $\gon(C)$ is the gonality of $C$ which by definition is the minimal degree of pencils on $C$.
As pointed out in \cite{RATH}, although the degree bound above is not expected to be optimal, there is an example of a plane quartic curve showing the degree bound $4g-4$ does not work. In this short note, we investigate the failure of the gonality conjecture when $\deg L = 4g-4$. The main result is the following.

\begin{theorem}\label{main01} 
Let $C$ be a nonsingular projective curve of genus $g\geq 2$, and $L$ be a line bundle on $C$ with $\deg L\geq 4g-4$. Then 
$$
K_{p,1}(C;L)\neq 0\Longleftrightarrow 1\leq p\leq \deg L-g-\gon(C)
$$
unless $L=\omega^2_C$ and either $g=2$ or $C$ is a plane quartic curve. In the exceptional cases, $K_{\deg L -g-\gon(C)+1, 1}(C; L) \neq 0$ but $K_{\deg L -g-\gon(C)+2, 1}(C; L)= 0$.
\end{theorem}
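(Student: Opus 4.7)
The strategy is to push Rathmann's effective bound from $4g-3$ down to $4g-4$ and to identify the exceptional cases where the Green--Lazarsfeld nonvanishing theorem itself produces an extra Koszul class beyond the predicted gonality range.

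Since $\deg L \geq 4g-4 \geq 2g-1$ for all $g \geq 2$, $L$ is nonspecial and Koszul duality on a curve yields
\[
K_{p,1}(C;L)^{\vee} \;\cong\; K_{r-1-p,\,1}(C,\omega_C;L), \qquad r = \deg L - g.
\]
The nonvanishing for $1 \leq p \leq r - \gon(C)$ follows via this duality from the Green--Lazarsfeld nonvanishing theorem applied to a minimal pencil. The content of the theorem therefore lies in the vanishing $K_{p',1}(C,\omega_C;L) = 0$ for $0 \leq p' \leq \gon(C) - 2$, together with producing the single extra class in the exceptional cases.

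To prove the vanishing, I would follow Rathmann's approach: it reduces the Koszul vanishing to a chain of $H^1$-vanishings on $C$ built from the kernel bundle $M_L = \ker(H^0(L)\otimes \sO_C \twoheadrightarrow L)$ and its exterior powers, all of which hold for $\deg L \geq 4g-3$ by degree reasons. At $\deg L = 4g-4$, exactly one cohomology group in the chain becomes borderline, and a direct analysis should show that its vanishing fails precisely when $L \cong \omega_C^2$. Hence for $L \not\cong \omega_C^2$, Rathmann's argument carries through with the sharper bound. For $L \cong \omega_C^2$, Green--Lazarsfeld nonvanishing applied to the decomposition $L = \omega_C + \omega_C$ gives $K_{2g-3,1}(C; \omega_C^2) \neq 0$, and a numerical comparison shows that $2g-3$ exceeds the predicted upper bound $r - \gon(C) = 3g-4 - \gon(C)$ precisely when $\gon(C) = g$. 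By Brill--Noether existence $\gon(C) \leq \lceil (g+2)/2 \rceil$, and equality $\gon(C) = g$ forces either $g = 2$ (where $\omega_C$ is the hyperelliptic $g^1_2$) or $g = 3$ non-hyperelliptic, i.e., $C$ a smooth plane quartic (where $\omega_C$ is the $g^2_4$).

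In these two exceptional cases, the claimed subsequent vanishing $K_{r - \gon(C) + 2, 1}(C; \omega_C^2) = 0$ is verified directly. For $g = 2$ this is trivial, since the corresponding dual index is negative. For a plane quartic it reduces via Koszul duality to the surjectivity of the multiplication map $H^0(\omega_C) \otimes H^0(\omega_C^2) \to H^0(\omega_C^3)$, which is immediate from the plane embedding since all three spaces are obtained by restricting polynomial forms on $\nP^2$. The main obstacle in this plan is the sharp analysis of Rathmann's argument at the boundary $\deg L = 4g-4$: isolating the unique borderline cohomology group that obstructs the improvement and matching it precisely with the geometric condition $L \cong \omega_C^2$; once this is done, the identification of the exceptional curves via Brill--Noether and the two remaining verifications are relatively direct.
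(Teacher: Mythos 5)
Your reduction is sound as far as it goes: duality to $K_{\gon(C)-2,1}(C,\omega_C;L)$, the observation that Rathmann's theorem (in the form requiring $h^1(L\otimes\omega_C^{-1})=0$) settles every $L$ of degree $4g-4$ except $L\cong\omega_C^2$, the Green--Lazarsfeld nonvanishing class at $p=2g-3$, the Brill--Noether identification of $\gon(C)=g$ with $g=2$ or a plane quartic, and the two terminal verifications are all correct and agree with the paper's outline. But there is a genuine gap exactly where the paper's new content lies: the case $L=\omega_C^2$ with $\gon(C)<g$. For such curves the theorem asserts the \emph{vanishing} $K_{3g-3-\gon(C),1}(C;\omega_C^2)=0$, equivalently $K_{\gon(C)-2,1}(C,\omega_C;\omega_C^2)=0$, and your proposal offers no argument for it. You note that the borderline $H^1$ in Rathmann's chain fails to vanish when $L\cong\omega_C^2$, but failure of a sufficient condition proves nothing; and your Green--Lazarsfeld class at $p=2g-3$ lies \emph{inside} the predicted range when $\gon(C)<g$, so it neither contradicts nor establishes anything about the boundary index. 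A hyperelliptic curve of genus $5$, or a general curve of genus $4$, already falls into this unhandled case.

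The paper fills this hole with a separate vanishing theorem for $B$ a $p$-very ample line bundle and $L=B\otimes\omega_C$ (Theorem \ref{main02}, via Theorem \ref{thm:01}): one has $K_{p,1}(C,B;B\otimes\omega_C)=H^1(C_{p+1},M_{p+1,B}\otimes N_{p+1,L})$, and this vanishes whenever $h^0(B)\geq p+3$, is isomorphic to $S^pH^0(\omega_C)\neq 0$ when $h^0(B)=p+2$, and vanishes when $h^0(B)=p+1$. The proof is an induction on the symmetric products $C_{p+1}$ using a resolution of $\sigma_{p+1}^*\wedge^kM_{p+1,B}$ coming from Proposition \ref{lm:01} together with a trace-map splitting of a connecting homomorphism; applied to $B=\omega_C$, $p=\gon(C)-2$, the dichotomy $h^0(\omega_C)=g\geq\gon(C)+1$ versus $g=\gon(C)$ is precisely what separates the generic case from the two exceptions. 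Your plan contains no substitute for this step, and the phrase ``a direct analysis should show'' is doing all the work; until that analysis is supplied (and it cannot proceed along Rathmann's chain, which genuinely breaks at $L=\omega_C^2$), the proof is incomplete.
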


An easy application of the theorem gives a uniform picture of syzygies of pluricanonical embedding of curves, especially the second power of the canonical divisor.  It has been a long standing interest to understand the syzygies of canonical curves. The shape of the minimal free resolution of $R(C; \omega_C)$ was predicted in Green's conjecture \cite[Conjecture 5.1]{Green:KoszulI}. It was verified by Voisin \cite{Voisin1, Voisin2} for general curves, but it is still widely open in general. For pluricanonical embedding $C \subseteq \nP(H^0(\omega_C^k))$, the picture of syzygies turns out to be complete, and we give a summary here.
Let $C$ be a curve of genus $g\geq 2$ and gonality $\gon(C)$. Put $L:=\omega^k_C$ and write $r:=h^0(L)-1$. For $k\geq 3$, Greens's $(2g+1+p)$-theorem and Rathmann's effective gonality theorem give the result that 
$$
K_{p,1}(C;\omega^k_C)\neq 0\Longleftrightarrow 1\leq p\leq r-\gon(C).
$$
For  $L=\omega^2_C$ and $r=3g-3$, Green's $(2g+1+p)$-theorem and Theorem \ref{main02} gives us the following two cases
\begin{enumerate}
	\item If either $g=2$ ($r=\gon(C)=2$), or  $C$ is a plane quartic curve ($r=5$ and $\gon(C)=3$), then
		$$
		K_{p,1}(C;\omega^2_C)\neq 0\Longleftrightarrow 1\leq p\leq r-\gon(C)+1.
		$$
	\item Otherwise,
		$$
		K_{p,1}(C;\omega^2_C)\neq 0\Longleftrightarrow 1\leq p\leq r-\gon(C).
		$$
\end{enumerate}

In the setting of Theorem \ref{main01}, Green--Lazarsfeld's nonvanishing theorem \cite[Appendix]{Green:KoszulI} shows that $K_{p,1}(C;L)\neq 0$ for $1 \leq p \leq \deg L - g-\gon(C)$. To prove the theorem, it is sufficient to prove that $K_{\deg L -g-\gon(C)+1, 1}(C; L) =0$.
By the duality theorem \cite[Theorem 2.c.6]{Green:KoszulI}, 
$$
K_{\deg L -g-\gon(C)+1, 1}(C; L) = K_{\gon(C)-2, 1}(C, \omega_C; L)^{\vee}.
$$
Notice that $\omega_C$ is $(\gon(C)-2)$-very ample. Recall that $B$ is $p$-very ample if the restriction map on global sections $H^0(B)\rightarrow H^0(B|_{\xi})$ is surjective for every effective divisor $\xi$ of degree $p+1$, (in other words, $\xi$ imposes independent conditions on the global sections of $B$). 
As in \cite{Ein:Gonality} and \cite{RATH}, it is natural to study more generally vanishing of $K_{p,1}(C, B; L)$ when $B$ is a $p$-very ample line bundle and $\deg L \geq \deg B + 2g-2$. The main result of \cite{RATH} says that if $H^1(C, L \otimes B^{-1})=0$, then $K_{p,1}(C, B; L)=0$. For our purpose, we only need to consider the case that $L=B \otimes \omega_C$. Theorem \ref{main01} can be deduced from the following:

\begin{theorem}\label{main02} 
Let $C$ be a nonsingular projective curve of genus $g \geq 0$, $B$ be a $p$-very ample line bundle on $C$, and $L:=B\otimes \omega_C$. 
	\begin{enumerate}
		\item If $h^0(B)\geq p+3$, then $K_{p,1}(C,B;L)=0$.
		\item If $h^0(B)=p+2$, then $K_{p,1}(C,B;L)=S^pH^0(\omega_C)$.
		\item If $h^0(B)=p+1$, then $K_{p,1}(C,B;L)=0$.
	\end{enumerate}
\end{theorem}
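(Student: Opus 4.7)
The plan is to analyze $K_{p,1}(C,B;L)$ using the kernel bundle $M_L$ defined by
$$0 \to M_L \to H^0(L) \otimes \sO_C \to L \to 0,$$
following Rathmann's general strategy but carefully tracking the obstruction coming from $H^1(L\otimes B^{-1}) = H^1(\omega_C)$, which is exactly the reason Rathmann's vanishing theorem cannot be applied directly here. The standard exact sequence
$$0 \to \wedge^{p+1} M_L \otimes B \to \wedge^{p+1} H^0(L) \otimes B \to \wedge^p M_L \otimes B \otimes L \to 0,$$
together with its cohomology long exact sequence, identifies
$$K_{p,1}(C,B;L) \cong \ker\bigl( H^1(\wedge^{p+1} M_L \otimes B) \to \wedge^{p+1} H^0(L) \otimes H^1(B) \bigr).$$
Serre duality then converts this into a statement about sections of a dual bundle, reducing the problem to understanding the interaction between the canonical class and $\wedge^{p+1} M_L \otimes B$.

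The main technical input adapts Rathmann's reduction to a general section: choose a general divisor $D \in |\omega_C|$ and use the short exact sequence $0 \to B \to L \to L|_D \to 0$, together with the Koszul-type sequence resolving $\wedge^p M_L \otimes B \otimes L$ in terms of simpler bundles. Combining these with induction on $p$ and on $h^0(B)$ allows one to isolate the precise contribution of the one-dimensional class in $H^1(\omega_C)$; depending on $h^0(B)$, this contribution is either absorbed into the image of the Koszul boundary or survives as a nontrivial cohomology class.

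In case (3), where $h^0(B) = p+1$, the $p$-very-ampleness forces $H^0(B)\otimes \sO_\xi \to B|_\xi$ to be an isomorphism for any effective $\xi$ of degree $p+1$; a direct diagram chase then shows that $\wedge^{p+1} H^0(L) \otimes H^0(B) \to H^0(\wedge^p M_L \otimes B \otimes L)$ is surjective, giving the vanishing. In case (1), where $h^0(B)\geq p+3$, the strategy is an induction obtained by passing from $B$ to a sub-line-bundle $B(-x)$ for a general point $x$ (equivalently, a codimension-one subspace of $H^0(B)$): the extra section available in $H^0(B)$ is precisely what absorbs the one-dimensional obstruction. In the critical case (2), where $h^0(B) = p+2$, no such absorption is possible, and one must instead construct an explicit isomorphism $K_{p,1}(C,B;L) \cong S^p H^0(\omega_C)$ by tracing the unique nontrivial class in $H^1(\omega_C)$ through the reduction and identifying the resulting extension with the symmetric algebra of $H^0(\omega_C)$.

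The main obstacle will be the explicit identification in case (2). Cases (1) and (3) reduce, once the Rathmann-style setup is in place, to inductive dimension arguments, and the essential work is to verify that the inductive step goes through without loss. Case (2), however, demands a canonical isomorphism with $S^p H^0(\omega_C)$ rather than merely a dimension equality: one has to produce a natural map from $S^p H^0(\omega_C)$ to $K_{p,1}(C,B;L)$ compatible with the multiplicative structure on the canonical ring, and show that this map is an isomorphism using the Serre-duality description of $K_{p,1}$. This bookkeeping is where I expect the most delicate work to lie.
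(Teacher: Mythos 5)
Your setup (the kernel bundle $M_L$ on $C$ and the identification of $K_{p,1}(C,B;L)$ with a kernel inside $H^1(\wedge^{p+1}M_L\otimes B)$) is standard and correct, but the proposal stops exactly where the theorem begins. The entire difficulty of the statement is that $H^1(L\otimes B^{-1})=H^1(\omega_C)\neq 0$, so Rathmann's vanishing cannot be invoked; you acknowledge this, but in case (1) you only assert that ``the extra section available in $H^0(B)$ absorbs the one-dimensional obstruction'' without giving the mechanism. The proposed induction replacing $B$ by $B(-x)$ is not actually set up: $B(-x)$ is only $(p-1)$-very ample, $L$ is no longer $B(-x)\otimes\omega_C$, and no projection-type lemma relating $K_{p,1}(C,B;L)$ to the Koszul groups of the reduced data is stated or proved, so there is no inductive step to check. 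In case (2) you explicitly defer the construction of the isomorphism with $S^pH^0(\omega_C)$; since that identification \emph{is} the assertion, the case is simply not proved, and nothing in your $M_L$-on-the-curve framework indicates where a symmetric power of $H^0(\omega_C)$ would come from. In case (3), the claim that a ``direct diagram chase'' yields surjectivity of $\wedge^{p+1}H^0(L)\otimes H^0(B)\to H^0(\wedge^pM_L\otimes B\otimes L)$ from the pointwise isomorphisms $H^0(B)\to H^0(B|_\xi)$ is unsubstantiated; the honest route is to observe that $p$-very ampleness with $h^0(B)=p+1$ forces $p=0$ and $B=\sO_C$, or $C=\nP^1$ and $B=\sO_{\nP^1}(p)$, and then compute directly.

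For comparison, the paper works on the symmetric product $C_{p+1}$ rather than on $C$: writing $K_{p,1}(C,B;L)=H^1(C_{p+1},M_{p+1,B}\otimes N_{p+1,L})$, where $M_{p+1,B}$ is the kernel of $H^0(B)\otimes\sO_{C_{p+1}}\to E_{p+1,B}$, case (1) becomes a vanishing theorem proved by resolving $\sigma_{p+1}^*(\wedge^kM_{p+1,B})$ via an exact sequence on $C_p\times C$ and descending inductively to the curve, where the final obstruction dies because $\wedge^{b-1-k-p}M_B$ has no global sections; case (2) is then essentially formal, because $h^0(B)=p+2$ makes $M_{p+1,B}$ a line bundle with $M_{p+1,B}\otimes N_{p+1,L}\cong S_{p+1,\omega_C}$, whose $H^1$ is known to be $S^pH^0(\omega_C)$; and case (3) follows from the classification just mentioned. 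If you want to carry out your route entirely on $C$, you need a genuine substitute for each of these three computations; as written, the proposal supplies none of them, so it should be regarded as a plausible outline with the substantive gaps still open.
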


The idea to prove the theorem is to use the kernel bundles on the symmetric products of the curve. We follow the approach introduced by Voisin \cite{Voisin1, Voisin2} and then used by Ein--Lazarsfeld \cite{Ein:Gonality}, Rathmann \cite{RATH}, and many others to conduct a computation of Koszul cohomology groups on the symmetric products of the curve. To be concrete, in our case,
$$
K_{p,1}(C, B; L) = H^1(C_{p+1}, M_{p+1,B} \otimes N_{p+1,L}),
$$
where $M_{p+1,B}$ is the kernel bundle of the evaluation map $H^0(C, B) \otimes \sO_{C_{p+1}} \to E_{p+1, B}$ of the tautological bundle $E_{p+1, B}$ and $N_{p+1,L}$ is a line bundle on $C_{p+1}$. More generally, we establish the following vanishing:
$$
H^i(C_{p+1}, \wedge^k M_{p+1, B} \otimes N_{p+1, L}) = 0~~\text{ for $i>0$}
$$
when $h^0(B) \geq p+k+2$. We hope that our results and methods may shed lights on the similar problems for higher dimensional varieties.

\bigskip
\noindent {{\bf \em Acknowledgments.} The authors would like to thank Lawrence Ein for suggestions and comments.

\section{Preliminaries}\label{sec:prelim}

\noindent Let us start with setting up notations used throughout the paper. Let $C$ be a nonsingular projective curve of genus $g$. For any $p\geq 0$, denote by $C_{p+1}$ the $(p+1)$-th symmetric product. Write $U_{p+1}=C_p\times C$ to be the universal family over $C_{p+1}$. One has a commutative diagram 
$$\xymatrix{
	U_{p+1}=C_{p}\times C \ \ar[dr]_{\sigma_{p+1}} \ar@{^{(}->}[r]^-j &C_{p+1}\times C\ar[d]^{\pi_{p+1}}\\
	&C_{p+1}}$$
in which $\pi_{p+1}$ is the projection map, $j$ is an embedding defined by $j(\xi, x)=(\xi+x, x)$, and $\sigma_{p+1}=\pi_{p+1}|_{U_{p+1}}$ so that $\sigma_{p+1}(\xi, x)=\xi+x$. Write $pr \colon U_{p+1}\rightarrow C$ to be the projection map to $C$.

\begin{definition}
	Let $B$ be a line bundle on  $C$. For $p\geq 0$, define 
	$$E_{p+1,B}=\sigma_{p+1,*}(pr^*B) \text{ and }N_{p+1,B}=\det E_{p+1,B}.$$
\end{definition}

\begin{remark}
	For basic properties of the vector bundles $E_{p+1,B}$ and the line bundle $N_{p+1,B}$, we refer the reader to the paper \cite{ENP}. Here we mention that $N_{p+1,B}=S_{p+1,B}(-\delta_{p+1})$, where $S_{p+1,B}$ is the invariant descend of 
$$
B^{\boxtimes p+1}=\underbrace{B\boxtimes  \cdots \boxtimes B}_{\text{$p+1$ times}}
$$ 
on $C^{p+1}$ to $C_{p+1}$ under the action of the permutations group $\mathfrak{S}_{p+1}$ on $C^{p+1}$ and $\sO_{C_{p+1}}(-\delta_{p+1})=N_{p+1,\sO_C}$.
\end{remark}

Let $B$ be a $p$-very ample line bundle on $C$. As the fiber of $E_{p+1, B}$ over $\xi \in C_{p+1}$ is $H^0(B|_{\xi})$, the evaluation map $H^0(B)\otimes \sO_{C_{p+1}}\rightarrow E_{p+1,B}$ on global sections is surjective. Define $M_{p+1,B}$ to be the kernel bundle of the evaluation map. We obtain a short exact sequence 
$$0\longrightarrow M_{p+1,B}\longrightarrow H^0(B)\otimes \sO_{C_{p+1}}\longrightarrow E_{p+1,B}\longrightarrow 0.$$ 
The following vanishing theorem about the kernel bundle $M_{p+1,B}$ is an immediate consequence of Ranthmann's vanishing theorem on Cartesian products of the curve.

\begin{proposition}\label{lm:02} Let $B$ be a $p$-very ample line bundle on $C$, and $L$ be a globally generated line bundle on $C$ such that $h^1(L)=h^1(L\otimes B^{-1})=0$. Then one has 
	$$H^k(C_{p+1},\wedge^mM_{p+1,B}\otimes N_{p+1,L})=0, \text{ for all }k>0,m>0.$$
\end{proposition}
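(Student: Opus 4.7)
The plan is to descend the stated vanishing on the symmetric product $C_{p+1}$ to an analogous vanishing on the Cartesian product $C^{p+1}$, where Rathmann's vanishing theorem from \cite{RATH} can be applied directly.

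First, I would exploit the $(p+1)!$-sheeted finite quotient map $q\colon C^{p+1} \to C_{p+1}$ by the symmetric group $\mathfrak{S}_{p+1}$. Since we are in characteristic zero, the normalized trace $\tfrac{1}{(p+1)!}\sum_{\sigma\in\mathfrak{S}_{p+1}}\sigma^*$ splits the inclusion $\sO_{C_{p+1}}\hookrightarrow q_*\sO_{C^{p+1}}$, and combining the projection formula with $R^iq_*=0$ for $i>0$ yields an injection
$$H^k(C_{p+1},\, F)\ \hookrightarrow\ H^k(C^{p+1},\, q^*F)$$
for every coherent sheaf $F$ on $C_{p+1}$ and every $k$. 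Since pullback along the flat map $q$ commutes with exterior powers and tensor products, it therefore suffices to prove
$$H^k\bigl(C^{p+1},\, \wedge^m q^*M_{p+1,B}\otimes q^*N_{p+1,L}\bigr) = 0 \quad \text{for } k>0,\ m>0.$$

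Next, I would identify the pullbacks explicitly. By flatness of $q$ and base change applied to the universal family diagram, $q^*E_{p+1,B}$ is described via the graphs of the coordinate projections $p_i\colon C^{p+1}\to C$, and $q^*M_{p+1,B}$ is realised as the kernel of the evaluation map $H^0(B)\otimes \sO_{C^{p+1}}\twoheadrightarrow q^*E_{p+1,B}$. Similarly, $q^*N_{p+1,L}$ is expressed through the external tensor product $L^{\boxtimes(p+1)}$ twisted by the appropriate diagonal correction coming from the ramification of $q$ along the big diagonal. This places the problem precisely in the setting of Rathmann's theorem on Cartesian products, which under the hypotheses $h^1(L)=h^1(L\otimes B^{-1})=0$ produces the required vanishing for all $k>0$ and $m>0$; together with the first step, this completes the proof.

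The main obstacle is not conceptual but a careful bookkeeping: one must match the pullback data on $C^{p+1}$—in particular the diagonal twist in $q^*N_{p+1,L}$ and the $\mathfrak{S}_{p+1}$-equivariant structure on $q^*M_{p+1,B}$—to the exact input required by Rathmann's theorem. Once this translation is carried out, the vanishing on $C^{p+1}$ is an immediate application of the theorem, and the descent of step one transfers it back to $C_{p+1}$.
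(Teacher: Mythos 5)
Your proposal is correct and follows essentially the same route as the paper: reduce to the Cartesian product $C^{p+1}$ via the finite quotient map $q$ and the trace splitting of $\sO_{C_{p+1}}\hookrightarrow q_*\sO_{C^{p+1}}$, then apply Rathmann's vanishing theorem (\cite[Theorem 3.1]{RATH}) to $q^*(\wedge^m M_{p+1,B}\otimes N_{p+1,L})$. The paper states this more tersely (citing Rathmann's theorem directly for the pulled-back sheaf rather than spelling out the identification on $C^{p+1}$), but the argument is the same.
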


\begin{proof} By \cite[Theorem 3.1]{RATH}, one has the vanishing 
	$$H^k(C^{p+1},q^*(\wedge^m M_{p+1,B}\otimes N_{p+1,L}))=0 \text{ for all }k>0, m>0,$$
	where $q \colon C^{p+1}\rightarrow C_{p+1}$ is the quotient map. Since $q$ is finite, $\sO_{C_{p+1}}$ is a direct summand of $q_*\sO_{C^{p+1}}$. Thus by projection formula, $\wedge^m M_{p+1,B}\otimes N_{p+1,L}$ is a direct summand of $q_*(q^*(\wedge^m M_{p+1,B}\otimes N_{p+1,L}))$, from which the result follows.  
\end{proof}

Next we prove a crucial property of kernel bundle $M_{p+1,B}$, which is important for us to use the inductive argument.

\begin{proposition}\label{lm:01} Let $B$ be a $p$-very ample line bundle. There is a short exact sequence 
	$$0\longrightarrow \sigma^*_{p+1}M_{p+1,B}\longrightarrow M_{p,B}\boxtimes \sO_C\longrightarrow (\sO_{C_p}\boxtimes B) (-U_p)\longrightarrow 0.$$
\end{proposition}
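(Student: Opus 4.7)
The plan is to first establish an auxiliary short exact sequence relating the tautological bundles,
\[
0 \longrightarrow (\sO_{C_p}\boxtimes B)(-U_p) \longrightarrow \sigma_{p+1}^{\,*}E_{p+1,B} \longrightarrow E_{p,B}\boxtimes \sO_C \longrightarrow 0, \qquad (\ast)
\]
on $U_{p+1}=C_p\times C$, and then to deduce the stated sequence from $(\ast)$ together with the two evaluation sequences defining $M_{p+1,B}$ and $M_{p,B}$ via the snake lemma.

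To produce $(\ast)$, I would base-change the flat projection $\pi_{p+1}\colon C_{p+1}\times C\to C_{p+1}$ along $\sigma_{p+1}$, obtaining a Cartesian square whose upper-left corner is $C_p\times C\times C$ and whose top arrow is $(\xi,x,y)\mapsto(\xi+x,y)$. Writing $q\colon C_p\times C\times C\to C_p\times C$ for the projection onto the first two factors and $p_C$ for the projection onto the last $C$-factor, flat base change gives $\sigma_{p+1}^{\,*}E_{p+1,B}=q_{*}(p_C^{\,*}B\otimes\sO_{\widetilde U})$, where $\widetilde U\subset C_p\times C\times C$ is the scheme-theoretic preimage of the universal divisor $U_{p+1}$. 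The essential geometric input is that $\widetilde U$ decomposes as a sum of Cartier divisors $\widetilde U=D_1+D_2$, with $D_1:=\{y=x\}$ the ``new point'' locus (isomorphic to $C_p\times C$ via $(\xi,x)\mapsto(\xi,x,x)$) and $D_2$ the pullback of $U_p\subset C_p\times C$ along $(\xi,x,y)\mapsto(\xi,y)$; moreover, under the identification $D_1\cong C_p\times C$, the Cartier divisor $D_1\cap D_2$ corresponds to $U_p$. Applying the standard sequence
\[
0\longrightarrow \sO_{D_1}(-D_2)\longrightarrow \sO_{D_1+D_2}\longrightarrow \sO_{D_2}\longrightarrow 0,
\]
twisting by $p_C^{\,*}B$, and pushing forward by $q$ then yields $(\ast)$: the $D_1$-piece contributes $(\sO_{C_p}\boxtimes B)(-U_p)$ because $q|_{D_1}$ is an isomorphism, while $q|_{D_2}$ factors through the finite map $\sigma_p\times\id\colon U_p\times C\to C_p\times C$, so higher direct images vanish and the definition of $E_{p,B}$ identifies the pushforward of the $D_2$-piece with $E_{p,B}\boxtimes\sO_C$.

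For the snake lemma step, I would stack the pullback along $\sigma_{p+1}$ of the evaluation sequence defining $M_{p+1,B}$ on top of the external product with $\sO_C$ of the evaluation sequence defining $M_{p,B}$, with the middle vertical map the identity on $H^0(B)\otimes\sO_{U_{p+1}}$ and the right vertical map the surjection from $(\ast)$. Commutativity of the right square reflects naturality of restriction: the fiberwise composition $H^0(B)\to H^0(B|_{\xi+x})\to H^0(B|_\xi)$ coincides with the direct restriction $H^0(B)\to H^0(B|_\xi)$. The snake lemma then provides injectivity of the induced map $\sigma_{p+1}^{\,*}M_{p+1,B}\hookrightarrow M_{p,B}\boxtimes\sO_C$ and identifies its cokernel with $\ker\bigl(\sigma_{p+1}^{\,*}E_{p+1,B}\to E_{p,B}\boxtimes\sO_C\bigr)=(\sO_{C_p}\boxtimes B)(-U_p)$, which is precisely the sequence to be established.

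The main obstacle I expect is the divisor-theoretic identity $\widetilde U=D_1+D_2$ together with the scheme-theoretic equality $D_1\cap D_2=U_p$ inside $D_1\cong C_p\times C$; these must be checked locally, especially along the boundary locus where the ``new'' point coincides with one of the ``old'' points. Once those Cartier-divisor identifications are in place, the rest is a formal diagram chase combined with flat base change.
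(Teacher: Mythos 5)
Your argument is correct, and its geometric core is identical to the paper's: base-change to $C_p\times C\times C$ and decompose the pulled-back universal divisor as the ``new point'' locus plus the pullback of $U_p$ (your $D_1,D_2$ are the paper's $D_0,D_1$). The difference is in the packaging. The paper twists the ideal-sheaf sequence $0\to\sO(-D_0-D_1)\to\sO(-D_1)\to\sO_{D_0}(-D_1)\to 0$ by $pr^*B$ and pushes it down, using the identifications $\sigma^*_{p+1}M_{p+1,B}=\bar{\pi}_*(pr^*B(-D_0-D_1))$ and $M_{p,B}\boxtimes\sO_C=\bar{\pi}_*(pr^*B(-D_1))$ to land directly on the three terms of the desired sequence; it then proves surjectivity on the right by a separate fiberwise argument (restriction to $\{\xi\}\times C$ is the evaluation map of $B(-\xi)$, which is globally generated since $B$ is $p$-very ample). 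You instead push forward the structure-sheaf sequence to obtain your $(\ast)$ on the tautological bundles --- which is exactly the sequence recorded in the Remark immediately after the paper's proof --- and then run the snake lemma against the two evaluation sequences. This buys you the right-hand surjectivity for free (it is absorbed into the surjectivity of the two evaluation maps, i.e.\ into $p$- and $(p-1)$-very ampleness), at the modest cost of checking commutativity of the right-hand square, which you correctly reduce to naturality of restriction $H^0(B)\to H^0(B|_{\xi+x})\to H^0(B|_{\xi})$. In both versions the only genuinely geometric inputs are the Cartier-divisor identities $\bar{\sigma}^*U_{p+1}=D_1+D_2$ and $D_1\cap D_2=U_p$ inside $D_1\cong C_p\times C$, which the paper also asserts without detailed verification; your remark that these should be checked by comparing the fibers over $(\xi,x)$ (the degree-$(p+1)$ divisor $\xi+x$ versus $x$ and $\xi$) is the standard way to settle them.
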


\begin{proof} Denote by $\alpha \colon M_{p,B}\boxtimes \sO_C\rightarrow (\sO_{C_p}\boxtimes B) (-U_p)$ the morphism appeared on the right hand side of the sequence. We first show that it is surjective. Indeed, choose any $\xi\in C_p$, and consider the fiber $C=\{\xi\}\times C\subseteq C_p\times C$ over $\xi$. Restricting $\alpha$ to this fiber yields the evaluation map 
$$\alpha_{\xi}:H^0(B(-\xi))\otimes \sO_C\longrightarrow B(-\xi).$$
Since $B$ is $p$-very ample and $\xi$ has degree $p$, it follows that $B(-\xi)$ is $0$-very ample and thus globally generated. Hence $\alpha_{\xi}$ is surjective. This means that  $\alpha$ is surjective. 

Next we consider the following fiber product diagram
$$
\begin{CD}
	@.C_p\times C\times C @>\bar{\sigma}>> C_{p+1}\times C\  @.\supseteq U_{p+1} \\
	@.@V\bar{\pi}VV @VV\pi_{p+1}V \\
	U_{p+1}=@.C_{p}\times C @>>\sigma_{p+1}> C_{p+1}.
\end{CD}
$$
On $C_p\times C\times C$, we have two divisors $D_0$ and $D_1$ defined in the way that $D_0$ is the image of 
$$
C_p\times C \longrightarrow C_p\times C\times C, \quad  (\xi,x)\longmapsto(\xi, x,x),
$$
and $D_1$ is the image of 
$$C_{p-1}\times C\times C\longrightarrow C_p\times C\times C, \quad (\xi, y,x)\longmapsto (\xi+x,y,x).
$$ 
Observe that 
$$
\bar{\sigma}^*U_{p+1}=D_0+D_1 \text{ and }D_0\cap D_1=C_{p-1}\times C.
$$
It is easy to check that 
$$
\sigma^*_{p+1}M_{p+1,B}=\bar{\pi}_*(pr^*B(-D_0-D_1) \text{ and } M_{p,B}\boxtimes\sO=\bar{\pi}_*pr^*B(-D_1),
$$
where $pr:C_p\times C\times C\rightarrow C$ is the projection to the right hand side component $C$.
Now we can form a short exact sequence on  $C_p\times C\times C$,
$$0\longrightarrow \sO(-D_0-D_1)\longrightarrow \sO(-D_1)\longrightarrow \sO_{D_0}(-D_1)\longrightarrow 0.$$
Note that $\sO_{D_0}(-D_1)= \sO_{C_p\times C}(-U_p)$. Tensoring the short exact sequence with $pr^*B$ and then pushing it down to $C_p\times C$, we obtain the desired short exact sequence.
\end{proof}

\begin{remark} 
The proof above shows that for any line bundle $B$ (not necessarily $p$-very ample), one has a short exact sequence 
$$
0\longrightarrow pr^*B(-U_{p})\longrightarrow \sigma^*_{p+1}E_{p+1,B}\longrightarrow E_{p,B}\boxtimes \sO_C\longrightarrow 0
$$
on the universal family $U_{p+1}$.
\end{remark}

\section{Proofs of Main Results}

\noindent In this section, we prove the main results of the paper -- Theorems \ref{main01} and \ref{main02}. We keep using the notations introduced in Section \ref{sec:prelim}. On the universal family $U_{p+1}$, consider the short exact sequence 
\begin{equation}\label{eq:10}
	0\longrightarrow \sO_{U_{p+1}}\longrightarrow \sO_{U_{p+1}}(U_p)\longrightarrow \sO_{U_p}(U_p)\longrightarrow 0
\end{equation}
associated to the divisor $U_{p}$. The normal sheaf $\sO_{U_{p}}(U_p)$ of $U_p$ in $U_{p+1}$ can be expressed as 
$$
\sO_{U_{p}}(U_p)\cong (\sO_{C_{p-1}}\boxtimes \omega^{-1}_C)(U_{p-1}).
$$
Let $L$ be a line bundle on $C$. Tensoring $pr^*L$ with the short exact sequence (\ref{eq:10}), we obtain a short exact sequence 
$$
0\longrightarrow \sO_{C_{p}}\boxtimes L\longrightarrow (\sO_{C_{p}}\boxtimes L)(U_p)\longrightarrow (\sO_{C_{p-1}}\boxtimes L\otimes \omega^{-1}_C)(U_{p-1})\longrightarrow 0
$$
on $U_{p+1}$. Pushing it down to $C_p$ by the projection map $\pi_{p}:U_{p+1}\rightarrow C_p$ yields a connecting map $\delta$ in the associated long exact sequence
$$0\longrightarrow H^0(L)\otimes \sO_{C_p}\longrightarrow \pi_{p,*}((\sO_{C_{p}}\boxtimes L)(U_p))\longrightarrow \sigma_{p,*}((\sO_{C_{p-1}}\boxtimes L\otimes \omega^{-1}_C)(U_{p-1}))\stackrel{\delta}{\longrightarrow} \cdots \hspace{2cm}$$
$$\hspace{7cm} \cdots \stackrel{\delta}{\longrightarrow}H^1(L)\otimes \sO_{C_p}\longrightarrow R^1\pi_{p,*}((\sO_{C_{p}}\boxtimes L)(U_p))\longrightarrow 0,$$
where $\sigma_p$ is the restriction of $\pi_p$ onto the divisor $U_p$. To understand the connecting map $\delta$, we consider its dual map $\delta^\vee$ by applying $\sHom(-, \sO_{C_{p+1}})$. It is easy to calculate that 
$$\big(\sigma_{p,*}((\sO_{C_{p-1}}\boxtimes L\otimes \omega^{-1}_C)(U_{p-1}))\big)^\vee=\sigma_{p,*}(\sO_{C_{p-1}}\boxtimes L^{-1}\otimes \omega_C)=E_{p,L^{-1}\otimes\omega_C}.$$
Then the dual map $\delta^\vee$ turns out to be the evaluation map
$$H^0(L^{-1}\otimes \omega_C)\otimes \sO_{C_p}\stackrel{\delta^\vee}{\longrightarrow} E_{p,L^{-1}\otimes \omega_C}.$$
We shall only need the special case that $L=\omega_C$. In this case, the map $\delta^\vee$ splits $E_{p,\sO_C}$ by the trace map. As a consequence of the splitting, we have 
$$
(\sigma_{p,*}\sO_{U_p})^\vee \cong\sigma_{p,*}(\sO_{U_{p}}(U_{p-1}))\cong \sO_{C_p}\oplus \sK_p,
$$
where the direct summand $\sK_p$ is the kernel sheaf of the connecting map $\delta$  fitting into a short exact sequence
$$
0\longrightarrow H^0(\omega_C)\otimes \sO_{C_p}\longrightarrow \pi_{p,*}((\sO_{C_{p}}\boxtimes \omega_C)(U_p))\longrightarrow \sK_p\longrightarrow 0.
$$

\begin{theorem}\label{thm:01} Let $B$ be a $p$-very ample line bundle on $C$. Consider a line bundle $L:=B\otimes \omega_C$. Suppose that $h^0(B)\geq p+k+2$ for $k\geq 1$. Then 
	\begin{equation}\label{eq:01}
			H^i(U_{p+1}, \sigma^*_{p+1}(\wedge^kM_{p+1,B})\otimes (N_{p,L}\boxtimes L))=0\quad \text{ for }i>0.		
	\end{equation}
As a consequence, one has 
$$H^i(C_{p+1},\wedge^kM_{p+1,B}\otimes N_{p+1,L})=0 \quad \text{ for }i>0.$$
\end{theorem}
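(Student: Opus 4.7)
My plan is to prove \eqref{eq:01} by induction on $p\ge 0$ and then derive the $C_{p+1}$-statement from \eqref{eq:01} via finite flat descent along $\sigma_{p+1}$. For the base case $p=0$, one has $U_1=C_1=C$ and the assertion collapses to $H^1(C,\wedge^k M_B\otimes L)=0$ for $h^0(B)\ge k+2$; using $\det M_B=B^{-1}$ and $L=B\otimes\omega_C$, Serre duality converts this into $H^0(C,\wedge^{h^0(B)-1-k}M_B)=0$, a standard Koszul-theoretic fact for a globally generated line bundle on a curve.

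For the inductive step, take the $k$-th exterior power of the short exact sequence of Proposition \ref{lm:01} (clean because the quotient is a line bundle) and tensor with $N_{p,L}\boxtimes L$. Using the key identity $\sigma_{p+1}^*N_{p+1,L}=(N_{p,L}\boxtimes L)(-U_p)$, the resulting short exact sequence on $U_{p+1}$ has middle term $(\wedge^k M_{p,B}\otimes N_{p,L})\boxtimes L$ and third term $\sigma_{p+1}^*(\wedge^{k-1}M_{p+1,B}\otimes N_{p+1,L})\otimes pr^*B$. Higher cohomology of the middle term vanishes by K\"unneth: $H^1(C,L)=H^0(B^{-1})^\vee=0$ since $\deg B>0$, while $H^a(C_p,\wedge^k M_{p,B}\otimes N_{p,L})=0$ for $a\ge 1$ is the "consequence" statement at index $p-1$, which is the inductive hypothesis. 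The cohomology of the third term, by the projection formula along the finite map $\sigma_{p+1}$ together with $\sigma_{p+1,*}(pr^*B)=E_{p+1,B}$, equals $H^i(C_{p+1},\wedge^{k-1}M_{p+1,B}\otimes N_{p+1,L}\otimes E_{p+1,B})$; a secondary induction on $k$ via the evaluation sequence $0\to M_{p+1,B}\to H^0(B)\otimes\sO_{C_{p+1}}\to E_{p+1,B}\to 0$, combined with a downward induction on cohomological degree (bounded above by $\dim C_{p+1}=p+1$), controls this term.

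For the second statement, $\sigma_{p+1}$ is finite flat of degree $p+1$, and in characteristic zero the trace splits $\sO_{C_{p+1}}$ off from $\sigma_{p+1,*}\sO_{U_{p+1}}$; hence $H^i(C_{p+1},\wedge^k M_{p+1,B}\otimes N_{p+1,L})$ is a direct summand of $H^i(U_{p+1},\sigma_{p+1}^*(\wedge^k M_{p+1,B}\otimes N_{p+1,L}))$. Relating the latter to \eqref{eq:01} via the residue sequence $0\to\sO_{U_{p+1}}(-U_p)\to\sO_{U_{p+1}}\to\sO_{U_p}\to 0$ tensored with $\wedge^k\sigma_{p+1}^*M_{p+1,B}\otimes(N_{p,L}\boxtimes L)$ reduces the remaining vanishing to a cohomology computation on $U_p\cong C_{p-1}\times C$ that is accessible through the inductive hypothesis at index $p-1$.

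The main obstacle is the secondary induction on $k$ for the third term: the natural reduction via the evaluation sequence produces $\wedge^{k-1}M\otimes M\otimes N$, whose Schur decomposition $\wedge^k M\oplus \Sigma^{k-1,1}M$ (valid in characteristic zero) must be carefully navigated both to feed back into the original statement via the downward induction on cohomological degree and to control the Schur piece, with the sharp hypothesis $h^0(B)\ge p+k+2$ providing exactly the room needed to close the nested inductions.
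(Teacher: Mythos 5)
Your overall strategy (induct on $p$ using the sequence of Proposition \ref{lm:01}, reduce the base case to $H^0(C,\wedge^{j}M_B)=0$ via Serre duality, and descend from $U_{p+1}$ to $C_{p+1}$ by a trace splitting) is in the right spirit, but the inductive step as you set it up has two genuine gaps that your proposal acknowledges only obliquely and does not close.

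First, in your short exact sequence the object you want to control, $\sigma_{p+1}^*(\wedge^kM_{p+1,B})\otimes(N_{p,L}\boxtimes L)$, is the \emph{sub}-object. The long exact sequence therefore gives
$H^1(\mathrm{sub})\cong\mathrm{coker}\big(H^0(\mathrm{middle})\to H^0(\mathrm{quotient})\big)$ once $H^1(\mathrm{middle})=0$ is known, so the case $i=1$ --- exactly the case needed for $K_{p,1}$ --- requires a surjectivity statement on global sections that does not follow from any of your inductive hypotheses, and you give no argument for it. The same defect reappears in your residue-sequence reduction of the $C_{p+1}$-statement to \eqref{eq:01}. Second, your treatment of the quotient term leads, via the evaluation sequence, to $H^{i+1}(C_{p+1},M_{p+1,B}\otimes\wedge^{k-1}M_{p+1,B}\otimes N_{p+1,L})$, and for $k\geq 2$ the Schur summand complementary to $\wedge^kM_{p+1,B}$ is controlled by nothing in sight: Rathmann's vanishing (Proposition \ref{lm:02}) is unavailable here precisely because $h^1(L\otimes B^{-1})=h^1(\omega_C)\neq 0$, which is the whole difficulty of the borderline case $L=B\otimes\omega_C$. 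You name this as ``the main obstacle'' but offer no mechanism to overcome it; since the theorem is asserted for all $k\geq 1$ (and the paper's own argument genuinely needs general $k$), this cannot be waved away.

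For contrast, the paper avoids both problems by using the \emph{full} Koszul-type left resolution of $\sigma_{p+1}^*(\wedge^kM_{p+1,B})$ with terms $(\wedge^{k+j}M_{p,B}\boxtimes B^{-j})(jU_p)$, so that the target is a quotient and only vanishing in degrees $\geq i+j-1$ of the $j$-th term is needed (no $H^0$-surjectivity ever enters). The terms with $j\geq 2$ are handled by Rathmann's vanishing fiberwise over $pr\colon U_{p+1}\to C$ (the twist $L(jx)$ with $j\geq 2$ restores $h^1=0$) plus Leray; the critical $j=1$ term is handled by the trace splitting of the connecting map $\delta$, which converts it into the statement \eqref{eq:01} at the index $(p-1,k+1)$. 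The induction thus moves along $p+k=\mathrm{const}$ down to the curve, where your base-case computation applies. Finally, the descent to $C_{p+1}$ is done not through the residue sequence but through the fact that $\sO_{C_{p+1}}(-\delta_{p+1})$ is a direct summand of $\sigma_{p+1,*}(\sO_{C_p}(-\delta_p)\boxtimes\sO_C)$ (\cite[Lemma 3.5]{ENP}), which makes $\wedge^kM_{p+1,B}\otimes N_{p+1,L}$ a direct summand of the pushforward of the sheaf in \eqref{eq:01} with no extra term to analyze. If you want to salvage your argument, the essential missing ideas are precisely these two: resolve rather than truncate, and split off the wanted sheaf as a direct summand rather than fit it into an exact sequence whose $H^0$'s you cannot compare.
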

\begin{proof} First observe that by \cite[Lemma 3.5]{ENP}, $\sO_{C_{p+1}}(-\delta_{p+1})$ is a direct summand of the vector bundle $\sigma_{p+1,*}(\sO_{C_p}(-\delta_p)\boxtimes\sO_C)$. Thus the bundle
	$$\sigma_{p+1,*}( \sigma_{p+1}^*(\wedge^kM_{p+1,B})\otimes (N_{p,L}\boxtimes L))\cong\wedge^kM_{p+1,B}\otimes S_{p+1,L}\otimes \sigma_{p+1,*}(\sO_{C_p}(-\delta_p)\boxtimes\sO_C)$$ 
	 contains $\wedge^kM_{p+1,B}\otimes N_{p+1,L}$ as a direct summand. Since $\sigma_{p+1}$ is a finite map, the second vanishing statement in the theorem would follow from the first one. Thus in the sequel, it suffices to show the first vanishing statement (\ref{eq:01}).  

To this end, we use the short exact sequence in Lemma \ref{lm:01} to yield a locally free resolution of $\sigma^*_{p+1}(\wedge^kM_{p+1,B})$ as follows:
$$
\cdots\longrightarrow (\wedge^{k+2}M_{p,B}\boxtimes B^{-2})(2U_p)\longrightarrow (\wedge^{k+1}M_{p,B}\boxtimes B^{-1})(U_p)\longrightarrow \sigma^*_{p+1}(\wedge^kM_{p+1,B})\longrightarrow 0.
$$
Tensoring it with $N_{p,L}\boxtimes L$ gives rise to a resolution 
$$
\cdots\longrightarrow (\wedge^{k+2}M_{p,B}\otimes N_{p,L})\boxtimes (L\otimes B^{-2})(2U_p)\longrightarrow (\wedge^{k+1}M_{p,B}\otimes N_{p,L})\boxtimes (L\otimes B^{-1})(U_p)\longrightarrow \cdots\hspace{2cm}$$
	$$\hspace{8cm} \cdots\longrightarrow\sigma^*_{p+1}(\wedge^kM_{p+1,B})\otimes (N_{p,L}\boxtimes L)\longrightarrow 0.
$$
We make the following claim:

\begin{claim} One has
	$$R^tpr_{*}\Big((\wedge^{k+j}M_{p,B}\otimes N_{p,L})\boxtimes (L\otimes B^{-j})(jU_p)\Big)=0~~ \text{ for } t\geq 1, j\geq 2,$$
	where $pr \colon U_{p+1}\rightarrow C$ is the projection map.
\end{claim}
{\em Proof of Claim}. For a point $x\in C$, the restriction of the sheaf $(\wedge^{k+j}M_{p,B}\otimes N_{p,L})\boxtimes (L\otimes B^{-j})(jU_p)$ onto the fiber $pr^{-1}(x)\cong C_p$ equals $\wedge^{k+j}M_{p,B}\otimes N_{p,L(jx)}$, and $H^t(\wedge^{k+j}M_{p,B}\otimes N_{p,L(jx)}) = 0$ for $t > 0$ by Proposition \ref{lm:02}. Thus the claimed vanishing holds by base change. 

\medskip

By the claim above and using Larry spectral sequence 
$$H^s(R^tpr_{*}((\wedge^{k+j}M_{p,B}\otimes N_{p,L})\boxtimes (L\otimes B^{-j})(jU_p)))\Rightarrow H^{s+t}(U_{p+1},(\wedge^{k+j}M_{p,B}\otimes N_{p,L})\boxtimes (L\otimes B^{-j})(jU_p)),$$
we see that 
$$H^i(U_{p+1},(\wedge^{k+j}M_{p,B}\otimes N_{p,L})\boxtimes (L\otimes B^{-j})(jU_p))=0, \text{ for }i\geq 2, j\geq 2.$$
Thus  chasing through the resolution of  $\sigma^*_{p+1}(\wedge^{k}M_{p+1,B})\otimes (N_{p,L}\boxtimes L)$, in order to prove the vanishing (\ref{eq:01}),  the only left thing is to show the case when $j=1$, i.e.,  to show 
\begin{equation}\label{eq:03}
	H^i(U_{p+1},(\wedge^{k+1}M_{p,B}\otimes N_{p,L})\boxtimes \omega_C (U_p))=0,
\end{equation}
where we use the fact $L\otimes B^{-1}\cong \omega_C$.
To do this, we tensor $(\wedge^{k+1}M_{p,B}\otimes N_{p,L})\boxtimes\omega_C$ with the short exact sequence (\ref{eq:10}).  Pushing down the resulting sequence to $C_p$ by the projection map $\pi_p:U_{p+1}\rightarrow C_{p}$, we obtain  a long exact sequence
$$ 0\longrightarrow \wedge^{k+1}M_{p,B}\otimes N_{p,L}\otimes H^0(\omega_C)\longrightarrow \wedge^{k+1}M_{p,B}\otimes N_{p,L}\otimes \pi_{p,*}(\sO_{C_p}\boxtimes \omega_C(U_p))\longrightarrow\ldots \hspace{2cm}$$
$$\hspace{2.5cm}\ldots\longrightarrow\wedge^{k+1}M_{p,B}\otimes N_{p,L}\otimes \sigma_{p,*}(\sO_{U_p}(U_{p-1}))\stackrel{\delta}{\longrightarrow}  \wedge^{k+1}M_{p,B}\otimes N_{p,L}\longrightarrow  \cdots$$
$$\hspace{8cm}\cdots\stackrel{\phantom{b} }{\longrightarrow} R^1\pi_{p,*}(\wedge^{k+1}M_{p,B}\otimes N_{p,L}\boxtimes\omega_C(U_p))\longrightarrow 0.$$
As in the discussion located before the theorem, the connecting map $\delta$ splits.  This means that $R^1\pi_{p,*}(\wedge^{k+1}M_{p,B}\otimes N_{p,L}\boxtimes\omega_C(U_p))=0$ and $ \wedge^{k+1}M_{p,B}\otimes N_{p,L}$ is a direct summand of $\wedge^{k+1}M_{p,B}\otimes N_{p,L}\otimes \sigma_{p,*}(\sO_{U_p}(U_{p-1}))$. Thus we reduce the vanishing (\ref{eq:03}) to showing the vanishing
\begin{equation}\label{eq:04}
	H^i(C_p,\wedge^{k+1}M_{p,B}\otimes N_{p,L}\otimes \sigma_{p,*}(\sO_{U_p}(U_{p-1})))=0.
\end{equation} 
Observe that $$N_{p,L}\otimes \sigma_{p,*}(\sO_{U_p}(U_{p-1}))=\sigma_{p,*}(N_{p-1,L}\boxtimes L).$$ By projection formula, the vanishing (\ref{eq:04}) would follow from the following vanishing 
\begin{equation}
	H^i(U_{p},\sigma^*_p(\wedge^{k+1}M_{p,B})\otimes (N_{p-1,L}\boxtimes L))=0.
\end{equation}
Repeating this argument and noticing that $B$  is $(p-1)$-very ample with $h^0(B)\geq (p-1)+(k+1)+2$, we finally reduce the problem to showing the vanishing 
$$H^i(C,\wedge^{k+p}M_{B}\otimes L)=0,$$
Here we write $M_B=M_{1,B}$ for simplicity.
The only nontrivial case is when $i=1$. Write $b=\rank M_B$ and notice that $\det M^\vee_B=B$. By Serre duality,
$$H^1(C,\wedge^{k+p}M_B\otimes L)\cong H^0(C, \omega_C\otimes \det M^\vee_B\otimes \wedge^{b-1-k-p}M_B\otimes L^{-1})^\vee=H^0(C, \wedge^{b-1-k-p}M_B)^\vee.$$
Now as $\wedge^{b-1-k-p}M_B$ is a direct summand of $\otimes^{b-1-k-p}M_B$ and the latter has no global sections, we conclude $H^1(C,\wedge^{k+p}M_B\otimes L)=0$ as desired. This completes the proof.
\end{proof}

\begin{proposition}\label{p:12}  Let $B$ be a $p$-very ample line bundle on a curve $C$. Consider a line bundle $L=B\otimes \omega_C$. 
	\begin{enumerate}
		\item 	If $h^0(B)= p+k+1$ for $k\geq 1$. Then 
		$$H^i(C_{p+1}, \wedge^kM_{p+1,B}\otimes N_{p+1,L})=H^i(C_{p+1},S_{p+1,\omega_C})=S^{p+1-i}H^0(\omega_C).$$
		\item If $h^0(B)=p+k$ for $k\geq 1$, then $\wedge^kM_{p+1,B}=0$, and therefore
				$$H^i(C_{p+1}, \wedge^kM_{p+1,B}\otimes N_{p+1,L})=0.$$
		
	\end{enumerate}

\end{proposition}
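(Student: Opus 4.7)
The plan is to treat the two cases in parallel, the only difference being whether $\rank M_{p+1,B}$ equals $k$ or drops below. Since $B$ is $p$-very ample, $E_{p+1,B}$ has rank exactly $p+1$, so from
$$0\longrightarrow M_{p+1,B}\longrightarrow H^0(B)\otimes \sO_{C_{p+1}}\longrightarrow E_{p+1,B}\longrightarrow 0$$
one reads off $\rank M_{p+1,B}=h^0(B)-(p+1)$. In case (2), $h^0(B)=p+k$ forces this rank to be $k-1$, so $\wedge^kM_{p+1,B}=0$ and the cohomology claim is immediate.

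In case (1), the rank is exactly $k$, hence $\wedge^kM_{p+1,B}=\det M_{p+1,B}=(\det E_{p+1,B})^{-1}=N_{p+1,B}^{-1}$. Combining the remark in Section~\ref{sec:prelim} -- namely $N_{p+1,A}=S_{p+1,A}(-\delta_{p+1})$ -- with the multiplicativity $S_{p+1,A\otimes B}=S_{p+1,A}\otimes S_{p+1,B}$ inherited from the identity $(A\otimes B)^{\boxtimes p+1}=A^{\boxtimes p+1}\otimes B^{\boxtimes p+1}$ on $C^{p+1}$, I would then obtain
$$\wedge^kM_{p+1,B}\otimes N_{p+1,L}=N_{p+1,B}^{-1}\otimes N_{p+1,L}=S_{p+1,L\otimes B^{-1}}=S_{p+1,\omega_C},$$
which gives the first identification in (1). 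Note that the two $\sO_{C_{p+1}}(-\delta_{p+1})$ factors cancel, so no $\delta_{p+1}$ twist survives -- this cancellation is specifically what makes the case $L=B\otimes \omega_C$ work out cleanly.

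For the cohomology of $S_{p+1,\omega_C}$, I use that the quotient map $q\colon C^{p+1}\to C_{p+1}$ is finite and $S_{p+1,\omega_C}=(q_*\omega_C^{\boxtimes p+1})^{\mathfrak{S}_{p+1}}$ by construction, so $H^i(C_{p+1},S_{p+1,\omega_C})$ is the $\mathfrak{S}_{p+1}$-invariant part of $H^i(C^{p+1},\omega_C^{\boxtimes p+1})$. Applying K\"unneth and then extracting invariants with the Koszul signs produced by permuting the odd-degree class in $H^1(\omega_C)$ yields
$$H^i(C_{p+1},S_{p+1,\omega_C})=\Sym^{p+1-i}H^0(\omega_C)\otimes \wedge^iH^1(\omega_C).$$
Since $h^1(\omega_C)=1$, this equals $S^{p+1-i}H^0(\omega_C)$ for $i=0,1$ and vanishes for $i\geq 2$, which is exactly what is needed for the application to Theorem~\ref{main02}. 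The main technical obstacle I foresee is keeping the Koszul signs straight in the K\"unneth step so that the even slots really contribute a symmetric power and the single odd slot contributes an exterior power; once that is pinned down, everything else is bookkeeping with the multiplicative descent functor $S_{p+1,\cdot}$ and a rank count.
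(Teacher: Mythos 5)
Your argument is correct and follows essentially the same route as the paper: a rank count kills case (2), and in case (1) the identification $\wedge^k M_{p+1,B}\cong N_{p+1,B}^{-1}$ together with the multiplicativity of $S_{p+1,\cdot}$ and the cancellation of the $\delta_{p+1}$-twists reduces everything to the cohomology of $S_{p+1,\omega_C}$. The only difference is that where the paper simply cites \cite[Lemma 3.7]{ENP} for $H^i(C_{p+1},S_{p+1,\omega_C})$, you re-derive that lemma via the finite quotient $q\colon C^{p+1}\to C_{p+1}$, K\"unneth, and $\mathfrak{S}_{p+1}$-invariants (correctly noting that the answer vanishes for $i\geq 2$ since $h^1(\omega_C)=1$), which is exactly the content of the cited result.
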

\begin{proof} For (1), since $M_{p+1,B}$ has rank $k$ and $\wedge^k M_{p+1,B}\cong N^{-1}_{p+1,B}\cong S_{p+1,B^{-1}}(\delta_{p+1})$, we compute $$\wedge^kM_{p+1,B}\otimes N_{p+1,L}\cong S_{p+1,L\otimes B^{-1}}\cong S_{p+1,\omega_C}.$$
	The result then follows from \cite[Lemma 3.7]{ENP}. For	(2), since $M_{p+1,B}$ has rank $k-1$, the result follows immediately. 
\end{proof}

We will only need Theorem \ref{thm:01} and Proposition \ref{p:12} for the case $k=1$. In the following proposition, we classify when a $p$-very ample line bundle $B$ can have $h^0(B) \leq p+2$. 

\begin{proposition}\label{p:11} Let $B$ be a $p$-very ample line bundle on $C$, and $p\geq 0$. 
	\begin{enumerate}
		\item $h^0(B)=p+1$ if and only if either $p=0$ and $B=\sO_C$ or $p\geq1$, $C=\nP^1$ and $B=\sO_{\nP^1}(p)$.
		\item $h^0(B)=p+2$ if and only if one of the following cases holds.
		\begin{enumerate}
			\item [(i)] $g=0$, $p\geq 0$ and $B=\sO_{\nP^1}(p+1)$.
			\item [(ii)] $g=1$, $p\geq 0$ and $\deg B=p+2$.
			\item [(iii)] $g\geq 2$, either $p=0$ and $B$ is a base point free pencil, or $p=1$ and $C\subseteq \nP(H^0(B))$ is a plane curve of degree $\geq 4$.
		\end{enumerate}
	\end{enumerate}
\end{proposition}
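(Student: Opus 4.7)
The strategy throughout is to translate the $p$-very ampleness of $B$---the equality $h^0(B(-\xi))=h^0(B)-(p+1)$ for every length-$(p+1)$ subscheme $\xi\subset C$---into control on the morphism $\phi_B\colon C\to\nP(H^0(B))$. The converse directions (checking that the listed cases are genuinely $p$-very ample with the stated $h^0$) reduce to short Riemann--Roch calculations on $\nP^1$ and elliptic curves together with the equivalence of $1$-very ampleness with very ampleness, so I will concentrate on the necessity arguments.

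For part (1), the hypothesis gives $h^0(B(-\xi))=0$ for every $\xi\in C_{p+1}$. My first step will be to bound $\deg B\leq p$: if $\deg B\geq p+1$, then any $D\in|B|$ (existing by base-point freeness) contains a length-$(p+1)$ subscheme $\xi$, and $D-\xi$ produces a nonzero section of $B(-\xi)$, a contradiction. Then $\phi_B\colon C\to\nP^p$ factors through a nondegenerate irreducible curve of degree at most $p$, which must be a rational normal curve; so $\phi_B$ is birational onto $\nP^1$ and, since both source and target are smooth, it is an isomorphism, giving $C\cong\nP^1$ and $B\cong\sO_{\nP^1}(p)$. The case $p=0$ is immediate, since a globally generated line bundle with a one-dimensional space of sections is trivial.

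For part (2), I will handle $p=0$, $p=1$, and $p\geq 2$ separately. The case $p=0$ is just the statement that $B$ is a base-point-free pencil, and Riemann--Roch in each genus range reads off cases (i)--(iii). When $p=1$, $B$ is very ample with $h^0(B)=3$, so $C$ embeds as a smooth plane curve of degree $d=\deg B$, and the formula $g=\binom{d-1}{2}$ pins down $d$ once $g$ is fixed, completing the classification. The main obstacle is the remaining case $p\geq 2$ with $g\geq 2$, which I plan to rule out by a projection argument: for any $\xi'\in C_{p-1}$, reducing the $p$-very ampleness of $B$ on divisors of the form $\xi'+\xi''$ with $\xi''\in C_2$ shows that $B(-\xi')$ is itself very ample with $h^0=3$, so $|B(-\xi')|$ embeds $C$ as a smooth plane curve of the fixed degree $d'=d-p+1$, and $g\geq 2$ combined with $g=\binom{d'-1}{2}$ forces $d'\geq 4$. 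Consequently every $B(-\xi')$ lies in $W^2_{d'}(C)$. Invoking the classical uniqueness of the $g^2_{d'}$ on a smooth plane curve of degree $d'\geq 4$---namely $W^2_{d'}(C)=\{\sO_C(1)\}$---forces $B(-\xi')$ to be independent of $\xi'$ in $\Pic(C)$, so every pair of divisors in $C_{p-1}$ is linearly equivalent. This characterises $\nP^1$, contradicting $g\geq 2$. The uniqueness of the $g^2_{d'}$ on smooth plane curves is the one classical input I would cite (for instance from Arbarello--Cornalba--Griffiths--Harris) rather than reprove.
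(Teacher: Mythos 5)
Your proof is correct, but for the decisive step of part (2) --- ruling out $p\geq 2$ when $g\geq 2$ --- you take a genuinely different route from the paper. Both arguments start identically: subtract a degree-$(p-1)$ effective divisor $\xi'$ to obtain a very ample $A=B(-\xi')$ with $h^0(A)=3$, embedding $C$ as a smooth plane curve of degree $d'\geq 4$. The paper then stays self-contained: it uses adjunction to write $\omega_C=A^{\otimes(d'-3)}$ and Serre duality to convert the independent-conditions equality $h^1(B(-\xi'+x_1))=h^1(B(-\xi'))$ into $h^0(A^{\otimes(d'-4)}(-x_1))=h^0(A^{\otimes(d'-4)})$, which is absurd since $A^{\otimes(d'-4)}$ is globally generated and nonzero (trivial when $d'=4$, very ample when $d'\geq 5$). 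You instead let $\xi'$ range over all of $C_{p-1}$ and invoke the classical uniqueness of the $g^2_{d'}$ on a smooth plane curve of degree $d'\geq 4$ to force $B(-\xi')$ to be a fixed element of $\Pic(C)$, so that all effective divisors of degree $p-1\geq 1$ are linearly equivalent and $C\cong \nP^1$, a contradiction. Your argument is valid and arguably more geometric, but it imports a nontrivial classical theorem (for $d'\geq 5$ the uniqueness of the $g^2_{d'}$ is a real result that must be cited, e.g.\ from Arbarello--Cornalba--Griffiths--Harris; only $d'=4$ is an immediate Serre-duality computation), whereas the paper's version needs nothing beyond adjunction and duality. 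Part (1) and the cases $p\leq 1$ of part (2) agree with the paper up to cosmetic differences: you bound $\deg B\leq p$ by exhibiting a nonzero section of $B(-\xi)$ inside a member of $|B|$, while the paper bounds the degree of the embedded curve via the spanning condition; both then identify a rational normal curve and conclude $C\cong\nP^1$, $B\cong\sO_{\nP^1}(p)$.
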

\begin{proof} (1) If $p=0$, then $B$ is a globally generated line bundle with $H^0(C,B)=1$. Then $B=\sO_C$ since the only section of $B$ is nowhere vanishing.  Assume $p\geq 1$, so $B$ is very ample and gives an embedding of $C$ into the space $\nP^p=\nP(H^0(B))$. As $B$ is $p$-very ample, any $p+1$ points of $C$ will span the whole space $\nP^p$, which means that the degree of $C$ would be smaller than $p$. But $C$ is also nondegenerate in $\nP^p$ and thus has degree $\geq p$. Hence $C$ has degree exactly $p$, and therefore, it is  a rational normal curve. 

\noindent (2) Since (i) and (ii) are obvious, we only need to prove (iii). If $p=0$, then $B$ is a base point free pencil. Assume that $p\geq 2$. Take $p-1$ points $x_1,\ldots, x_{p-1}$ of $C$, and put $D:=x_1+\cdots+x_{p-1}$.  Since $B$ is a $p$-very ample, we see that $B(-D)$ is very ample with $h^0(B(-D))=3$ and $h^1(B(-D+x_1))=h^1(B(-D))=h^1(B)$. This means $C$ is a plane curve of some degree $d\geq 4$ embedded by $B(-D)$ into $\nP^2$, and thus, the canonical line bundle $\omega_C$ has the form $\omega_C=(B(-D))^{d-3}$ by the adjunction formula. By duality, the equality $h^1(B(-D+x_1))=h^1(B(-D))$ is the same as the equality $h^0((B(-D))^{d-4}(-x_1))=h^0((B(-D))^{d-4})$, which is impossible because $B(-D)$ is very ample. Thus we conclude $p=1$  and $C\subseteq \nP(H^0(B))$ is a plane curve of degree $\geq 4$.	
\end{proof}

Recall that the gonality of $C$ captures the positivity of the canonical line bundle $\omega_C$. More precisely, $\gon(C) \geq p+2$ if and only if $\omega_C$ is $p$-very ample. In particular,
$$
\gon(C) = \max\{p+2 \mid \text{$\omega_C$ is $p$-very ample}\}.
$$
We can compare the gonality with the genus. The following proposition may be well-known.

\begin{corollary}\label{p:15} Assume that $g\geq 2$. Then $g\geq \gon(C)$, and the equality holds if and only if either $g=2$ or $C$ is a plane quartic curve.
\end{corollary}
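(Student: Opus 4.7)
The plan is to apply Proposition \ref{p:11} directly to the canonical bundle $B = \omega_C$, exploiting the bridge between gonality and very-ampleness of $\omega_C$ recalled just above the corollary. Set $p := \gon(C) - 2$, which is non-negative since $g \geq 2$ forces $\gon(C) \geq 2$. By the characterization $\gon(C) = \max\{p+2 \mid \omega_C \text{ is } p\text{-very ample}\}$, this choice of $p$ makes $\omega_C$ a $p$-very ample line bundle on $C$ with $h^0(\omega_C) = g$.

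First, I would rule out $g \leq p$. The bound $h^0(\omega_C) \geq p+1$ is automatic from the $p$-very-ample condition (the restriction map to a degree $p+1$ effective divisor has image of dimension $p+1$). Hence $g \geq p+1$. Next, the case $g = p+1$ is excluded: by Proposition \ref{p:11}(1) it would force $C = \nP^1$ and $\omega_C = \sO_{\nP^1}(p)$, contradicting $g \geq 2$. Therefore $g \geq p+2 = \gon(C)$, which is the desired inequality.

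For the equality case, suppose $g = \gon(C)$, so $h^0(\omega_C) = p+2$, and Proposition \ref{p:11}(2) applies. Cases (i) and (ii) of that proposition are ruled out by $g \geq 2$, so we land in case (iii). The subcase $p = 0$ gives $\gon(C) = 2$ and thus $g = 2$. The subcase $p = 1$ forces $g = 3$ and embeds $C$ via $\omega_C$ as a plane curve of degree $\deg \omega_C = 2g-2 = 4$, i.e., a plane quartic. Conversely, for $g = 2$ the curve is hyperelliptic so $\gon(C) = 2 = g$; and for a plane quartic $C \subseteq \nP^2$ of genus $3$, projection from any point of $C$ realizes a $g^1_3$, so $\gon(C) \leq 3$, while the non-hyperelliptic condition (implied by canonical-plane-embedding) gives $\gon(C) \geq 3$, hence $\gon(C) = 3 = g$.

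There is no real obstacle here: the proof is essentially a bookkeeping translation of Proposition \ref{p:11} through the gonality characterization. The only point that requires mild care is verifying the converse for the plane quartic (gonality exactly $3$, not $2$), which follows from projection together with non-hyperellipticity.
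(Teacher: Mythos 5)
Your proof is correct and follows essentially the same route as the paper: setting $p=\gon(C)-2$, using the $p$-very ampleness of $\omega_C$, and applying Proposition \ref{p:11} with $B=\omega_C$ to both the inequality and the equality case. The extra care you take with the converse for the plane quartic is a welcome detail the paper leaves implicit, but it does not change the argument.
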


\begin{proof} 
Since $g\geq 2$, it follows that $\gon(C)\geq 2$. Write $\gon(C)=p+2$. Then $\omega_C$ is $p$-very ample. Applying Proposition \ref{p:11} to the case $B=\omega_C$, we see that $g\geq p+2$ and the equality holds if either $g=2$ (i.e., $g=\gon(C)=2$), or $C$ is a  plane curve of $g=3$ which is a plane quartic curve (i.e., $g=\gon(C)=3$).
\end{proof}

\begin{proof}[Proof of Theorem \ref{main02}] In (1) and (2), $B$ is ample and thus $h^1(L)=0$. This implies  $h^1(C_{p+1}, N_{p+1,L})=0$ and thus \cite[Lemma 1.1]{Ein:Gonality} yields
$$
K_{p,1}(C,B;L)=H^1(C_{p+1},M_{p+1,B}\otimes N_{p+1,L}).
$$
So the assertion (1) follows from Theorem \ref{thm:01} by taking $k=1$, and the assertion (2) follows from Proposition \ref{p:12} by taking $k=1$. For the assertion (3), if $p=0$, then $B=\sO_C$ and then $K_{0,1}(C;\omega_C)=0$ by definition of Koszul cohomology group. If $p\geq 1$, then by Proposition \ref{p:11}, $C=\nP^1$ and $B=\sO_{\nP^1}(p)$. Then $K_{p,1}(\nP^1,\sO_{\nP^1}(p);\sO_{\nP^1}(p-2))=0$ by a direct computation. 
\end{proof}

\begin{corollary}\label{cor:main01}
Assume that $g \geq 2$. Let $B$ be a $p$-very ample line bundle on $C$, and $L$ be a line bundle on $C$. Suppose that $\deg (L\otimes B^{-1}) \geq 2g-2$. Then one has 
	$$K_{p,1}(C,B;L)=0$$
unless $L=B\otimes \omega_C$ and either $(1)$ $p=0$ and $B$ is a base point free pencil, or $(2)$ $p=1$ and $C\subseteq\nP(H^0(B))$ is a plane curve.  In the exceptional cases, $K_{p,1}(C,B;L)\neq 0$ but $K_{p-1, 1}(C, B; L) = 0$.
\end{corollary}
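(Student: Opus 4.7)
The plan is to split the proof into two cases according to whether $L = B\otimes \omega_C$.

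When $L \neq B\otimes \omega_C$, I would apply Rathmann's vanishing theorem from \cite{RATH} directly; the only hypothesis to check is $H^1(C, L\otimes B^{-1}) = 0$. By Serre duality, $H^1(C, L\otimes B^{-1})^\vee = H^0(C, \omega_C\otimes B\otimes L^{-1})$, and the line bundle on the right has degree $\leq 0$ since $\deg(L\otimes B^{-1}) \geq 2g - 2$. A negative-degree line bundle has no sections, and a degree-zero line bundle has sections only when it is trivial --- but triviality would give $L = B\otimes\omega_C$, contrary to assumption. Hence $K_{p,1}(C, B; L) = 0$ in this case.

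When $L = B\otimes\omega_C$, I would apply Theorem \ref{main02} and branch on $h^0(B)$. If $h^0(B) \geq p + 3$, part (1) gives the vanishing immediately. If $h^0(B) \leq p + 1$, part (3) again gives vanishing, and Proposition \ref{p:11}(1) combined with $g \geq 2$ shows this subcase forces $p = 0$ and $B = \sO_C$, so no exception arises. The remaining subcase $h^0(B) = p + 2$ is handled by Theorem \ref{main02}(2), which gives $K_{p,1}(C, B; L) = S^p H^0(\omega_C)$; this is nonzero because $g \geq 2$ implies $h^0(\omega_C) \geq 2$. Proposition \ref{p:11}(2), specialized to $g \geq 2$, characterizes $p$-very ample line bundles with $h^0(B) = p + 2$ as precisely the two exceptional cases in the statement.

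For the supplementary claim that $K_{p-1, 1}(C, B; L) = 0$ in the exceptional cases, for $p = 0$ the group $K_{-1, 1}$ vanishes by convention. For $p = 1$, the bundle $B$ is very ample (so in particular $0$-very ample) with $h^0(B) = 3 \geq (p - 1) + 3$, and Theorem \ref{main02}(1) applied with $p$ replaced by $p - 1 = 0$ yields $K_{0, 1}(C, B; L) = 0$. This argument presents no major technical obstacle; it is essentially a packaging of Theorem \ref{main02}, Proposition \ref{p:11}, and the main result of \cite{RATH}. The one delicate point is the boundary case $\deg(L\otimes B^{-1}) = 2g - 2$ in the Serre duality step, where the hypothesis $L \neq B\otimes\omega_C$ is exactly what is needed to conclude.
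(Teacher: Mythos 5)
Your proposal is correct and follows essentially the same route as the paper: Rathmann's theorem handles $L \neq B\otimes\omega_C$ via the vanishing of $h^1(L\otimes B^{-1})$, and the case $L = B\otimes\omega_C$ is settled by branching on $h^0(B)$ using Theorem \ref{main02} together with the classification in Proposition \ref{p:11}. Your extra care at the boundary (the degree-zero Serre duality case and the $p=0$ convention for $K_{-1,1}$) only makes explicit what the paper leaves implicit.
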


\begin{proof} 
If $L\otimes B^{-1}\neq \omega_C$, then $h^1(L \otimes B^{-1})=0$ so that one can use Rathmann's theorem \cite[Theorem 1.1]{RATH} to get the desired result. Assume that $L\otimes B^{-1}=\omega_C$. By Theorem \ref{main02}, $K_{p,1}(C,B;L)=0$ if $h^0(B) \neq p+2$, and $K_{p,1}(C, B; L) \neq 0$ if $h^0(B) = p+2$. In the latter case, $K_{p-1,1}(C, B; L) = 0$ by Theorem \ref{main02} since $B$ is $(p-1)$-very ample and $h^0(B) = (p-1)+3$. However, if $h^0(B) = p+2$, then Proposition \ref{p:11} shows that either $(1)$ $p=0$ and $B$ is a base point free pencil, or $(2)$ $p=1$ and $C\subseteq\nP(H^0(B))$ is a plane curve.
\end{proof}

\begin{proof}[Proof of Theorem \ref{main01}] 
By Green--Lazarsfeld's nonvanishing theorem \cite[Appendix]{Green:KoszulI} and the duality theorem \cite[Theorem 2.c.6]{Green:KoszulI}, we only need to know when $K_{\gon(C)-2,1}(C,\omega_C;L)=0$ vanishes. As $\omega_C$ is $(\gon(C)-2)$-very ample, the theorem follows from Corollary \ref{cor:main01}.
\end{proof}

\bibliographystyle{alpha}

\begin{thebibliography}{ENP20}
	
	\bibitem[AN10]{AN}
	Marian Aprodu and Jan Nagel.
	\newblock Koszul Cohomology and Algebraic Geometry.
	\newblock University Lecture Series, 52, Amer. Math. Soc., Providence, RI, 2010.
	
	\bibitem[EL15]{Ein:Gonality}
	Lawrence Ein and Robert Lazarsfeld.
	\newblock The gonality conjecture on syzygies of algebraic curves of large
	degree.
	\newblock {\em Publ. Math. Inst. Hautes \'Etudes Sci.}, 122:301--313, 2015.
	
	\bibitem[ENP20]{ENP}
	Lawrence Ein, Wenbo Niu, and Jinhyung Park.
	\newblock Singularities and syzygies of secant varieties of nonsingular
	projective curves.
	\newblock {\em Invent. Math.}, 222(2):615--665, 2020.
	
	\bibitem[Gr84]{Green:KoszulI}
	Mark Green.
	\newblock Koszul cohomology and the geometry of projective varieties.
	\newblock {\em J. Differential Geom.}, 19(1):125--171, 1984.
	
	\bibitem[GL86]{Lazarsfeld:ProjNormCurve}
	Mark Green and Robert Lazarsfeld.
	\newblock On the projective normality of complete linear series on an algebraic
	curve.
	\newblock {\em Invent. Math.}, 83(1):73--90, 1986.
	
	\bibitem[GL88]{Lazarsfeld:SyzFiniteSet}
	Mark Green and Robert Lazarsfeld.
	\newblock Some results on the syzygies of finite sets and algebraic curves.
	\newblock {\em Compositio Math.}, 67(3):301--314, 1988.
	
	
	\bibitem[La04]{Pos}
	Robert Lazarsfeld.
	\newblock Positivity in algebraic geometry. I. Classical setting: line bundles and
	linear series. II. Positivity for vector bundles, and multiplier ideals.
	\newblock {\em Springer-Verlag}, 2004
	
		
	\bibitem[Ra16]{RATH}
	Juergen Rathmann.
	\newblock An effective bound for the gonality conjecture.
	
	
	\bibitem[Voi1]{Voisin1}
	Claire Voisin.
	\newblock \textit{Green’s generic syzygy conjecture for curves of even genus lying on a K3 surface}.
	\newblock J. Eur. Math. Soc. (JEMS) \textbf{4} (2002), 363--404.
	
	\bibitem[Voi2]{Voisin2}
	Claire Voisin.
	\newblock \textit{Green’s canonical syzygy conjecture for generic curves of odd genus}.
	\newblock Compositio Math. \textbf{141} (2005), 1163--1190.
	
\end{thebibliography}

\end{document}